\documentclass[reqno]{amsart}
\usepackage[english]{babel}
\usepackage{amsmath}
\usepackage{amsfonts}
\usepackage{mathrsfs}
\usepackage{amstext}
\usepackage{amssymb}
\usepackage{amsthm} 
\usepackage{hyperref} 
\usepackage{esint} 
\usepackage{enumerate}

\usepackage{color}
\usepackage{scrtime}
\usepackage{float}
\usepackage{microtype} 
\usepackage{graphicx}
\usepackage{tikz}
\usetikzlibrary{calc,fadings,decorations.pathreplacing,graphs}
\usepackage{hyperref}
\usepackage{pgfplots}
\usepackage{mathtools}
\pgfplotsset{compat = newest}

\pagecolor{white}

\newtheorem{thm}{Theorem}
\newtheorem{prop}[thm]{Proposition}
\newtheorem{cor}[thm]{Corollary}

\newtheorem{lem}[thm]{Lemma}

\begin{document}

\title[Continuous harmonic functions not in $H^s$]{Continuous harmonic functions on a ball that are not in $H^s$ for $s>1/2$}

\author{Roberto Bramati}
\address{Department of Mathematics: Analysis, Logic and Discrete Mathematics, Ghent University, Krijgslaan 281, 9000 Ghent, Belgium}
\email{Roberto.Bramati@UGent.be}

\author{Matteo Dalla Riva}
\address{Dipartimento di Ingegneria, Universit\`a degli Studi di Palermo, Viale delle Scienze, Ed. 8, 90128 Palermo, Italy}
\email{matteo.dallariva@unipa.it}

\author{Brian Luczak}
\address{Department of Mathematics, Vanderbilt University, 1326 Stevenson Center Lane, Nashville, TN 37212, USA}
\email{brian.b.luczak@vanderbilt.edu}

\begin{abstract}
We show that there are harmonic functions on a  ball ${\mathbb{B}_n}$ of $\mathbb{R}^n$, $n\ge 2$,  that are continuous, and even H\"older continuous, up to the boundary but not  in the Sobolev space $H^s(\mathbb{B}_n)$ for   $s$  bigger than a certain sharp bound. The idea for the construction is inspired by the two-dimensional example of a harmonic continuous function with infinite energy presented by Hadamard in 1906. To obtain examples in any dimension $n\ge 2$ we exploit certain series of spherical harmonics. 
\end{abstract}



\subjclass[2020]{31A20, 31B25, 33C55, 35A09, 35B65,  42B37, 46E35}
\keywords{ill-behaved harmonic functions on a ball, spherical harmonics expansions.}

\maketitle

\section{Introduction} 


To the best of the authors' knowledge, there are two classic examples of ill-behaved continuous harmonic functions of the kind we consider in this paper. The first one is due to Prym \cite{Pr71} and was published in 1871 and the second (and more famous) one appeared in 1906  in a paper   by Hadamard \cite{Ha06}. Both authors sought to understand the potential limitations of the Dirichlet principle, that is, of the  equivalence between the Dirichlet boundary value problem and its variational formulation (see for example  Maz'ya and Shaposnikova \cite{MaSh98} and Bottazzini and Gray \cite{BoGr13} for a historical insight). Prym and Hadamard considered a two-dimensional Dirichlet problem in a ball and showed examples of harmonic functions that are continuous up to the boundary and have infinite energy. In  modern terms, these functions  do not belong to the space $H^1$. 

We might say that the present paper stems from that of Hadamard. So we will start by summarizing Hadamard's idea.  Let $\mathbb{B}_2$ be the unit ball  in $\mathbb{R}^2$ with center at the origin. Let $u$ be a harmonic function on $\mathbb{B}_2$   that can be written in the form
\begin{equation}\label{hadamard.1}
u(x)=\sum_{k=1}^\infty a_k r^k \cos(k t)\ \text{for }x\in\mathbb{B}_2\setminus\{0\},\qquad u(0)=0
\end{equation}
where $r=|x|$ and $t=\arg(x/|x|)$ and where $\{a_k\}_k$ is a suitable sequence of real numbers. Then Hadamard notices that  $u$ has energy 
\[
\int_{\mathbb{B}_2}|\nabla u|^2\,dx=\pi\sum_{k=1}^\infty ka_k^2
\]
and for 
\begin{equation}\label{hadamard.2}
a_k:=
\begin{cases}
0&\text{if }k\neq 2^{2j}\text{ for all }j\in\mathbb{N}\,,\\
2^{-j}&\text{if }k= 2^{2j}\text{ for some }j\in\mathbb{N}\,\\
\end{cases}
\end{equation}
the series that defines $u$ converges uniformly on the closure $\overline{\mathbb{B}_2}$ of $\mathbb{B}_2$, whereas the series that gives the energy of $u$ diverges.  It follows that $u$ is continuous on  $\overline{\mathbb{B}_2}$, harmonic in $\mathbb{B}_2$,  and does not belong to $H^1({\mathbb{B}_2})$.

With a modification of Hadamard's approach we may also obtain regularities below $H^1$. To do so,  we  replace the argument based on the computation of the   energy with an argument based on the trace theorem and on a suitable characterization of fractional order Sobolev spaces on the sphere   $\mathbb{S}_{1}$. We shall see that, with a suitable choice of the sequence $\{a_k\}_k$, we can make the function in \eqref{hadamard.1}  not belong to $H^{\sigma+1/2}(\mathbb{B}_2)$ for any $\sigma>0$ (see Proposition \ref{withanyYk} and the remark after it).

Moreover, using some known results on Weierstrass’ functions we can keep the H\"older continuity in check. Namely, for a given $0\le\alpha<1$ we can find a sequence $\{a_k\}_k$ such that the  harmonic function $u$ defined by \eqref{hadamard.1} has the following properties:
\begin{enumerate}[(i)]
\item\label{p1n=2} $u$ belongs to $C^{0,\alpha}(\overline{\mathbb{B}_2})$ but not to $C^{0,\alpha+\epsilon}(\overline{\mathbb{B}_2})$ for any $\epsilon>0$,
\item\label{p2n=2} $u$ does not belong to $H^{\sigma+1/2}(\mathbb{B}_2)$ for any $\sigma>0$ if $\alpha=0$ and  $\sigma\ge \alpha$ if $\alpha>0$
\end{enumerate}
(cf.~Propositions \ref{notCbeta>0} and \ref{notCbeta>alpha}, see also \cite{Lu19}).
As usual, we say that a function is of class $C^{0,\alpha}$ if it is H\"older continuous with exponent $\alpha$ and we understand that it is $C^{0,0}$ if it is continuous.

It is possible to see that the bounds for $\sigma$ in \eqref{p2n=2} are sharp. Indeed, if \eqref{p1n=2} is verified, then it follows that $u$ belongs to $H^{1/2}(\mathbb{B}_2)$ for $\alpha=0$ and to any $H^{\sigma+1/2}(\mathbb{B}_2)$ with $0<\sigma<\alpha$ if $\alpha>0$ (cf.~Proposition \ref{prop:sharp}). That is to say, once \eqref{p1n=2} is verified, the bounds for $\sigma$ in \eqref{p2n=2} ($\sigma>0$ for $\alpha=0$ and $\sigma\ge \alpha$ if $\alpha>0$) cannot be lowered.

Our  goal in the paper is to obtain a similar result for any dimension $n\ge 2$. Namely, we want to construct a harmonic function $u$ on the ($n\ge2$)--dimensional ball $\mathbb{B}_n$ that satisfies the $n$-dimensional analog of properties \eqref{p1n=2} and \eqref{p2n=2}: 
\begin{enumerate}[(I)]
\item\label{p1} $u$ belongs to $C^{0,\alpha}(\overline{\mathbb{B}_n})$ but does not belong to $C^{0,\alpha+\epsilon}(\overline{\mathbb{B}_n})$ for any $\epsilon>0$,
\item\label{p2} $u$ does not belong to $H^{\sigma+1/2}(\mathbb{B}_n)$ for any $\sigma>0$ if $\alpha=0$ and $\sigma\ge \alpha$ if $\alpha>0$.
\end{enumerate}
As for the two-dimensional case, if $u$ satisfies \eqref{p1} then it belongs to $H^{1/2}(\mathbb{B}_n)$ if $\alpha=0$ and to $H^{\sigma+1/2}(\mathbb{B}_n)$ if $0<\sigma<\alpha$ and, in this sense, the bounds for $\sigma$ are sharp (see Proposition \ref{prop:sharp}).

Our first attempt  consists in replacing the cosine functions in \eqref{hadamard.1} with a sequence of spherical harmonics $\{Y_k\}_k$ with each $Y_k$ of degree $k$. Namely, we take
\begin{equation}\label{idea0}
u(x):=\sum_{k=0}^\infty{a_k}r^k\, Y_k(\theta)\quad\text{for }x\in \overline{\mathbb{B}_n}\setminus\{0\}\qquad\text{and }u(0):=a_0Y_0\,,
\end{equation}
where $r:=|x|$ and $\theta:=x/|x|$. For a suitable choice of the sequence $\{a_k\}_k$ the function in \eqref{idea0} is harmonic and continuous in $\overline{\mathbb{B}_n}$.  We will see, however, that if we take a random sequence of spherical harmonic $\{Y_k\}_k$, then $u$ does not stay in  $H^{\sigma+1/2}(\mathbb{B}_n)$ only for $\sigma>(n-2)/2$, which, for $n\ge 3$, is  far from the optimal bound $\sigma>0$ (cf.~Proposition \ref{withanyYk}). In addition, it is not clear how we can take control of the H\"older continuity.  A better result can be obtained using not just any sequence of spherical harmonics, but a specific one,  the so called ``highest-weight spherical harmonics" (see Sogge \cite{So15}). With the right choice of the coefficients $\{a_k\}_k$ the corresponding function $u$ will be harmonic on $\mathbb{B}_n$ and will satisfy \eqref{p1}. But still, we will not get the sharp bounds in \eqref{p2}. We will instead have that 
\begin{itemize}
\item[(II')]\label{p2'} $u$ does not belong to $H^{\sigma+1/2}(\mathbb{B}_n)$ for any $\sigma>(n-2)/4$ if $\alpha=0$ and any $\sigma\ge\alpha+(n-2)/4$ if $\alpha>0$
\end{itemize}
(cf.~Propositions \ref{notCbeta>0} and \ref{notCbeta>alpha}).
The bound $(n-2)/4$ is smaller, and thus better, than the  bound $(n-2)/2$ that we get for a random sequence $\{Y_k\}_k$; however, we still do not have the sharp bounds. In addition, the bounds in (II') get worse as $n$ increases. For $n\ge 4$ we have functions that are actually in $H^1(\mathbb{B}_n)$, and so in this case we don't even get to the Hadamard's result of a function with infinite energy. 

The dependence on $n$ of  the bounds in (II') is related to the asymptotic behavior of the ratio between the sup-norm and the two-norm of the elements of the sequence $\{Y_k\}_k$.  For $n\ge 3$ this ratio is expected to grow with $k$. The slower this growth, the smaller are the bounds for $\sigma$ and the better is our result. The worse case is that of zonal spherical harmonics, whose ratio goes like $k^{\frac{n-2}{2}}$ and yields the $\sigma>(n-2)/2$ bound. For the highest-weight spherical harmonics the situation is a little better, the ratio grows like $k^{\frac{n-2}{4}}$ and we obtain (II'), but not yet the sharp bounds for $\sigma$.

So, although it looked like the very natural way to extend Hadamard's result, it seems that using a function as in \eqref{idea0} with an explicit sequence of harmonics $\{Y_k\}_k$ will hardly give us the sharp bounds for $\sigma$. In our paper we show two different ways to obtain these bounds. In the first example the sequence $\{Y_k\}_k$ is not explicit; in the second one the construction of $u$ is somehow less direct.
\begin{enumerate}[(1)]
\item As mentioned above, there is a relation between the growth rate of the sup-norm/two-norm ratio of $\{Y_k\}_k$ and the bounds for $\sigma$. For the sequence of highest-weight spherical harmonics the sup-norm/two-norm ratio grows slower than that of a random sequence, and this leads to smaller bounds. For a generic sequence it is even better though. Indeed, Neuheisel \cite{Ne00} proved that the ratio of ``almost all" sequences grows like $\sqrt{\ln k}$. This means that there are plenty of sequences $\{Y_k\}_k$ with ratio that grows like $\sqrt{\ln k}$ and if we plug one of them in the expression \eqref{idea0}, we obtain a continuous harmonic function $u$ that, for $\alpha=0$,  satisfies \eqref{p1} and \eqref{p2} (cf.~Theorem \ref{Neu}). So here it is, at least for $\alpha=0$ we have an ill-behaved harmonic function with the sharp bound on $\sigma$.  On the downside, we don't know any explicit sequence of harmonics with the logarithmic ratio growth, and thus we don't have any explicit expression for $u$. Also, we don't know if we can extend this approach to the case where $\alpha>0$.   
\item For the second example we introduce a specific domain $\Omega$  and, at first, we  construct ill-behaved harmonic functions on $\Omega$ rather than $\mathbb{B}_n$. The idea for the definition of  $\Omega$ is taken from a paper by Costabel \cite{Co19} and the shape of $\Omega$ is such that its boundary contains a cylinder-like portion $\mathbb{S}_1\times(-\delta,\delta)^{n-2}$. Then, in $\Omega$ we define a harmonic function $u$ using an analog of \eqref{idea0} with the sequence of harmonic polynomials associated to the highest-weight spherical harmonics. We can show that such function $u$ is harmonic on $\Omega$ and satisfies--for any $0\le \alpha<1$--conditions similar to \eqref{p1} and \eqref{p2}, where we just need to replace $\mathbb{B}_n$ with  $\Omega$ (cf.~Theorem \ref{inOmega}). So, on $\Omega$ we have the optimal result with the sharp bounds and for any $0\le\alpha<1$. The reason why we could have this on $\Omega$ with a function that highly resembles the one that yielded (II') on $\mathbb{B}_n$ is due to the specific shape of $\Omega$, which, in a sense, allows the singularity to ``spread" all over the cylindrical part. At this point we may bring back our result  from $\Omega$ to a ball $\mathbb{B}_n$. The function thus obtained is not explicit, but its restriction to the boundary of $\mathbb{B}_n$ is (cf.~Theorem \ref{fromOmega}). In principle, we  may obtain the function as the solution of a Dirichlet boundary  value problem or, equivalently, by solving certain boundary integral equations. 
\end{enumerate}

Here   above we mentioned paper \cite{Co19} by Costabel. It is worth noticing that the problem considered there  is somehow similar to the one of our paper. The goal is to find a $C^1$-domain $\Omega\subseteq\mathbb{R}^n$ where there is a function $u$, solution of a Poisson equation $\Delta u=g$ with $g\in C^\infty(\overline{\Omega})$ and with either Dirichlet or Neumann homogeneous conditions, and such that  $u$ belongs to $H^{3/2}(\Omega)$ but not to $H^{\sigma+3/2}(\Omega)$ for any $\sigma>0$. Costabel obtains similar results also in the context of $L^p$-Sobolev spaces with $1<p<+\infty$. A predecessor of Costabel's examples, but  for the case where $p=1$,  can be found in Jerison and Kenig \cite[Thm.~1.2 (b)]{JeKe95}. Besides dealing with a different equation, Costabel's view point differs from ours in the sense that his focus is on finding a specific domain, whereas we keep the domain fixed and as regular as possible (viz.~a ball) and look for specific functions.

 In a forthcoming paper we plan to show some application of the results presented here. In particular,  we will  construct some nonlinear transmission problems with solutions that are not continuously differentiable up to the contact interface. In this way we can verify that the regularity of the solutions obtained  in the paper \cite{DaMi15} with Mishuris is, in a sense, optimal.

The paper is organized as follows. Section \ref{prel} is a section of preliminaries. Here we introduce some common notions on spherical harmonics, the trace theorem, Weierstrass and similar functions, and some elements of potential theory. We observe that in this paper we don't need to deal with domains that are less than smooth (that is, $C^\infty$). For this reason some of the general results that we recall are stated with generous regularity assumptions. In Section \ref{sharp} we show that the bounds for $\sigma$ of \eqref{p2} cannot be lowered once \eqref{p1} is verified and are, in this sense, sharp (and similarly for \eqref{p1n=2} and \eqref{p2n=2}). In Section \ref{firstattempts} we present our first attempts to obtain a function that satisfies \eqref{p1} and \eqref{p2}. In Section \ref{random} we try with a random sequence of spherical harmonics and in Section \ref{highest} we obtain a better--but not yet optimal--result with the highest-weight spherical harmonics. In Section \ref{examples} we finally present two examples with the sharp bounds for $\sigma$. In Section \ref{generic} we use a generic sequence of harmonics with the sup-norm/two-norm growth ratio of that grows like $\sqrt{\ln k}$ and in Section \ref{domain} we exploit a specific domain $\Omega$ with a cylinder-like part.

\section{Some preliminaries}\label{prel}

\subsection{Spherical harmonics}

Let $-\Delta_S$ be the positive Laplace-Beltrami operator on the sphere $\mathbb{S}_{n-1}:=\partial\mathbb{B}_n$ with eigenvalues
\[
\lambda_0=0\le\lambda_1\le \lambda_2\le \dots 
\]
and orthnormal eigenfunctions 
\[
w_0, w_1, w_2,\dots
\]
where, as usual, the eigenvalues are  repeated according to their multiplicity and the eigenfunctions are orthonormal with respect to the standard
inner product of $L^2(\mathbb{S}_{n-1})$. Then it is well known (see, e.g., Lions and Magenes \cite[Chap.~1, Remark 7.6]{LiMa72}) that the Sobolev space $H^s(\mathbb{S}_{n-1})$ with $s>0$ can be characterized as the space of distributions $u$ on $\mathbb{S}_{n-1}$ such that $\sum_{j=0}^\infty\lambda_j^s|\langle u,w_j\rangle|^2<+\infty$. Namely, we have
\[
H^s(\mathbb{S}_{n-1})=\left\{u\in\mathcal{D}'(\mathbb{S}_{n-1})\,:\; \sum_{j=0}^\infty\lambda_j^s|\langle u,w_j\rangle|^2<+\infty\right\}\,,
\]
where $\mathcal{D}'(\mathbb{S}_{n-1})$ is the space of distributions on $\mathbb{S}_{n-1}$.

Let $\mathcal{P}_k$ be the space of the homogeneous real polynomials of $x_1,\dots,x_n$ of degree $k\in\mathbb{N}$ (in this paper $\mathbb{N}$ is the set of natural numbers including $0$). Let  $\mathcal{H}_k:=\{P\in\mathcal{P}_k\,:\;\Delta P=0\}$ be the subset of the harmonic polynomials  of $\mathcal{P}_k$, and  $H_k:=\{P_{|\mathbb{S}_{n-1}}\,:\; P\in\mathcal{H}_k\}$ be the set of the restrictions to $\mathbb{S}_{n-1}$ of polynomials of  $\mathcal{H}_k$. The functions of $H_k$ are called spherical harmonics of degree $k$ and we have
\[
d_k:=\dim H_k=(2k+n-2)\frac{(k+n-3)!}{k!(n-2)!} \quad\text{for all } k\in\mathbb{N}\setminus\{0\}
\]
(see, e.g., Stein and Weiss \cite[Chap.~IV, \S2]{StWe71}, see also Folland \cite[Corollary 2.55]{Fo95}). It will be useful to note that 
\begin{equation}\label{dksim}
d_k \sim k^{n-2}\ \text{as }k\to\infty.
\end{equation}
We also observe that if $Y_k\in H_k$ then
\begin{equation}\label{inf<2}
\|Y_k\|_\infty\le \sqrt{\frac{d_k}{\omega_n}}\|Y_k\|_2\,,
\end{equation}
where $\|\cdot\|_\infty$ and $\|\cdot\|_2$ denote the usual norms of the Lebesgue spaces $L^\infty(\mathbb{S}_{n-1})$ and $L^2(\mathbb{S}_{n-1})$, respectively, and $\omega_n$ is the volume of $\mathbb{B}_n$ (see, e.g., Stein and Weiss \cite[Chap.~IV, \S2]{StWe71}, see also Folland \cite[Theorem 2.57]{Fo95}).

Moreover, the eigenvalues of $-\Delta_S$ are given by the sequence 
\[
\mu_k:=k(k+n-2)\quad\text{with }k\in\mathbb{N}
\]
and the eigenspace corresponding to $\mu_k$ is $H_k$ (cf., e.g., Shubin \cite[\S22]{Sh01}). It follows that every $\mu_k$ has multiplicity $d_k$. Now, for all $k\in\mathbb{N}$ let $\{Y_{k,1},\dots,Y_{k,d_k}\}$ be an orthonormal basis for $H_k$. Then we have 
\begin{equation}\label{Hs}
H^s(\mathbb{S}_{n-1})=\left\{u\in\mathcal{D}'(\mathbb{S}_{n-1})\,:\; \sum_{k=0}^\infty(k(k+n-2))^s\sum_{j=1}^{d_k}|\langle u,Y_{k,j}\rangle|^2<+\infty\right\}
\end{equation}
(see also Barcel\'o, Luque, and P\'erez-Esteva \cite[Equation (4)]{BaEtAl20}).

\subsection{The trace theorem}

If $\Omega$ is an open bounded and smooth subset of $\mathbb{R}^n$, then the trace theorem for the Sobolev space $H^s(\Omega)$ with $s>1/2$ can be formulated as follows
\begin{thm}\label{trace}
If $s>1/2$, then there exists a surjective continuous map $\mathrm{tr}$ from $H^s(\Omega)$ to $H^{s-1/2}(\partial\Omega)$ such that 
\[
\mathrm{tr}\, v=u_{|\partial\Omega}
\]
for all functions $v\in H^s(\Omega)$ that have a continuous extension $u$ to $\overline{\Omega}$.
\end{thm}

The  proof can be found in many books, we refer the reader for example to  McLean \cite{Mc00}. For our purpose we  especially need the following easy corollary

\begin{cor}\label{tracecor}
Let $\sigma>0$. If $u$ is a continuous function on $\overline{\mathbb{B}_n}$ and $u_{|\mathbb{S}_{n-1}}$ is not in $H^{\sigma}(\mathbb{S}_{n-1})$, then $u$ is not in $H^{\sigma+1/2}(\mathbb{B}_n)$.
\end{cor}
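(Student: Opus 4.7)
The plan is to prove the corollary by contraposition, directly invoking Theorem \ref{tracethm}. So I would assume, for the sake of contradiction, that $u$ belongs to $H^s(\mathbb{B}_n)$, and show that this forces $u_{|\mathbb{S}_{n-1}}$ into $H^{s-1/2}(\mathbb{S}_{n-1})$, contrary to the hypothesis.

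First I would apply the trace theorem: since $s>1/2$ and $u\in H^s(\mathbb{B}_n)$, the map $\mathrm{tr}$ produces a well-defined element $\mathrm{tr}\, u\in H^{s-1/2}(\mathbb{S}_{n-1})$. Next, I would invoke the second part of the statement of Theorem \ref{tracethm}, which tells us that whenever a function $v\in H^s(\mathbb{B}_n)$ admits a continuous extension to $\overline{\mathbb{B}_n}$, its trace coincides with the boundary restriction of that extension. Here $u$ itself is already the continuous extension (the hypothesis is that $u$ is continuous on $\overline{\mathbb{B}_n}$ and belongs to $H^s(\mathbb{B}_n)$ by our contradictory assumption), so $\mathrm{tr}\, u = u_{|\mathbb{S}_{n-1}}$. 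Combining the two, $u_{|\mathbb{S}_{n-1}}\in H^{s-1/2}(\mathbb{S}_{n-1})$, contradicting the assumption of the corollary.

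There is essentially no obstacle here; the content is packaged in the trace theorem and the corollary is a routine restatement of its contrapositive for continuous representatives. The only subtlety worth flagging in the write-up is the identification of the (a priori abstractly defined) trace $\mathrm{tr}\, u$ with the pointwise restriction $u_{|\mathbb{S}_{n-1}}$, which is exactly the clause of Theorem \ref{tracethm} that makes the corollary usable in practice on functions known only to be continuous up to the boundary.
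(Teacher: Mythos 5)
Your proof is correct and matches the argument the paper leaves implicit: the corollary is stated without proof, treated as an immediate consequence of Theorem \ref{tracethm}, and the contrapositive you spell out (if $u\in H^s(\mathbb{B}_n)$ is continuous up to the boundary, then $\mathrm{tr}\,u=u_{|\mathbb{S}_{n-1}}\in H^{s-1/2}(\mathbb{S}_{n-1})$) is precisely that consequence.
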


\subsection{Weierstrass' (and similar) functions}

We collect  some known results on Weierstrass' or  Weierstrass' like functions.  For the sake of completeness we include the proofs, though they can be considered as folklore. We will exploit the following lemma (see also Zygmund \cite[Theorem 4.7]{Zyg02}).

\begin{lem}\label{cj} Let $\sum_k c_k$ be an absolutely convergent real series. Let
\[
f(t):=\sum_{k=0}^\infty c_k\,\cos(kt)\qquad\text{for all }t\in\mathbb{R}\,.
\]
If $f$ belongs to $C^{0,\alpha}(\mathbb{R})$ for some $0<\alpha\le 1$, then there exists $C>0$ such that  $|c_k|\le Ck^{-\alpha}$ for all $k\in\mathbb{N}$.
\end{lem}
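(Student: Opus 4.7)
The plan is to prove the bound via the classical "half-period shift" trick, which expresses each coefficient $c_k$ as an integral of a difference $f(t)-f(t+\pi/k)$ and thus inherits the H\"older estimate. The main obstacle is modest: it lies in verifying that $c_k$ is genuinely the $k$-th Fourier cosine coefficient of $f$, after which the rest is a short calculation.

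First I would observe that the absolute convergence $\sum_k|c_k|<+\infty$ guarantees that the series defining $f$ converges uniformly on $\mathbb{R}$, so $f$ is continuous and $2\pi$-periodic. Uniform convergence also allows termwise integration against $\cos(jt)$ on $[-\pi,\pi]$, and by the standard orthogonality relations for the cosine system one deduces
\[
c_k=\frac{1}{\pi}\int_{-\pi}^{\pi}f(t)\cos(kt)\,dt\qquad\text{for all }k\ge 1.
\]
(The case $k=0$ is not needed for the conclusion, since the claim is about the asymptotic decay.)

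Next, for a fixed $k\ge 1$, I would perform the change of variable $t\mapsto t+\pi/k$ in the integral above, using that $f$ is $2\pi$-periodic and that $\cos\bigl(k(t+\pi/k)\bigr)=-\cos(kt)$. This yields the companion identity
\[
c_k=-\frac{1}{\pi}\int_{-\pi}^{\pi}f(t+\pi/k)\cos(kt)\,dt,
\]
and averaging the two expressions for $c_k$ gives
\[
2c_k=\frac{1}{\pi}\int_{-\pi}^{\pi}\bigl[f(t)-f(t+\pi/k)\bigr]\cos(kt)\,dt.
\]

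Finally, the H\"older hypothesis provides a constant $L>0$ with $|f(t)-f(t+\pi/k)|\le L\,(\pi/k)^{\alpha}$ for all $t\in\mathbb{R}$. Taking absolute values inside the integral and using $|\cos(kt)|\le 1$, I obtain
\[
2|c_k|\le \frac{1}{\pi}\cdot 2\pi\cdot L\Bigl(\frac{\pi}{k}\Bigr)^{\alpha}=2L\pi^{\alpha}k^{-\alpha},
\]
so that $|c_k|\le C k^{-\alpha}$ with $C:=L\pi^{\alpha}$, which is the desired estimate. The whole argument is essentially the one that shows a $C^{0,\alpha}$ function has Fourier coefficients decaying like $k^{-\alpha}$, adapted here to the purely cosine series at hand.
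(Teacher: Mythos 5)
Your proof is correct and follows essentially the same route as the paper: express $c_k$ as a Fourier cosine coefficient, shift by the half-period $\pi/k$, average the two integral representations to produce the difference $f(t)-f(t+\pi/k)$, and bound it by the H\"older seminorm. The only cosmetic differences are the choice of integration interval ($[-\pi,\pi]$ versus $[0,2\pi]$) and your explicit justification of the termwise integration, which the paper leaves implicit.
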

\begin{proof} For $k\ge 1$ we have
\[
\begin{split}
c_k&=\frac{1}{\pi}\int_0^{2\pi}f(t)\cos(kt)\,dt=-\frac{1}{\pi}\int_0^{2\pi}f\Bigl(t+\frac{\pi}{k}\Bigr)\cos(kt)\,dt\\
&=\frac{1}{2\pi}\int_0^{2\pi}\left(f(t)-f\Bigl(t+\frac{\pi}{k}\Bigr)\right)\cos(kt)\,dt\,.
\end{split}
\]
Thus,
\[
|c_k|\le \frac{1}{2\pi}\int_0^{2\pi}\left|f(t)-f\Bigl(t+\frac{\pi}{k}\Bigr)\right|\,dt\le \|f\|_{C^{0,\alpha}}\left(\frac{\pi}{k}\right)^\alpha\,.
\]
The statement of the lemma follows.
\end{proof}

Then we have the following result on the Weierstrass' functions. One direction of the proof follows by Lemma \ref{cj}, the other direction is taken from Hardy \cite{Ha16}.

\begin{prop}\label{weierstrass}  Let  $0<\alpha<1$. Let $b\in\mathbb{N}$, $b>1$. Then the function 
\[
f(t):=\sum_{j=0}^\infty b^{-j\alpha}\cos(b^jt)\qquad\text{ for $t\in \mathbb{R}$}
\]
belongs to $C^{0,\alpha}(\mathbb{R})$  and does not belong to $C^{0,\beta}(\mathbb{R})$ for any $\beta>\alpha$.
\end{prop}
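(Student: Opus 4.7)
The plan is to address the two claims separately. Membership in $C^{0,\alpha}(\mathbb{R})$ is proved by a classical splitting of the defining series at a frequency index chosen adaptively in $|s-t|$, while non-membership in $C^{0,\beta}(\mathbb{R})$ for $\beta>\alpha$ is read off directly from Lemma \ref{cj} via the Fourier coefficients of $f$. The series is absolutely convergent since $\sum_j b^{-j\alpha}<\infty$, so $f$ is continuous and everything in what follows is justified pointwise.

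For the $C^{0,\alpha}$ estimate, I take $s,t\in\mathbb{R}$ with $0<|s-t|\le 1$ and choose $N\in\mathbb{N}$ so that $b^{-N-1}<|s-t|\le b^{-N}$. Then I split
\[
f(s)-f(t)=\sum_{j=0}^{N} b^{-j\alpha}\bigl[\cos(b^js)-\cos(b^jt)\bigr]+\sum_{j=N+1}^{\infty} b^{-j\alpha}\bigl[\cos(b^js)-\cos(b^jt)\bigr].
\]
For the ``low-frequency'' head I apply the mean value theorem to get $|\cos(b^js)-\cos(b^jt)|\le b^j|s-t|$, and bound the head by $|s-t|\sum_{j=0}^N b^{j(1-\alpha)}\le C\,|s-t|\,b^{N(1-\alpha)}\le C'|s-t|^\alpha$, using $b^N\le |s-t|^{-1}$. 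For the ``high-frequency'' tail I use the trivial bound $|\cos(b^js)-\cos(b^jt)|\le 2$ and obtain $2\sum_{j>N} b^{-j\alpha}\le C''b^{-N\alpha}\le C'''|s-t|^\alpha$, now using $b^{-N}<b|s-t|$. The case $|s-t|>1$ is immediate from absolute convergence since $|f(s)-f(t)|$ is then bounded by a constant, which is itself bounded by that constant times $|s-t|^\alpha$.

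For non-membership, suppose by contradiction that $f\in C^{0,\beta}(\mathbb{R})$ for some $\beta>\alpha$. Because $b\ge 2$ the frequencies $\{b^j\}_{j\ge 0}$ are pairwise distinct, so the given series coincides with the expansion $\sum_{k\ge 0} c_k\cos(kt)$ where $c_{b^j}=b^{-j\alpha}$ for $j\ge 0$ and $c_k=0$ otherwise; this is an absolutely convergent real series, and so Lemma \ref{cj} applies and yields a constant $C>0$ such that $|c_k|\le Ck^{-\beta}$ for all $k\ge 1$. Specializing to $k=b^j$ gives $b^{-j\alpha}\le Cb^{-j\beta}$, that is $b^{j(\beta-\alpha)}\le C$, which is impossible as $j\to\infty$ since $\beta>\alpha$.

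The main obstacle is really only the bookkeeping in the first part, namely choosing the cutoff $N$ so that both the head and the tail telescope to $|s-t|^\alpha$; once $N$ is chosen as above, the geometric sums (with ratios $b^{1-\alpha}>1$ and $b^{-\alpha}<1$, respectively) are elementary. The second part is essentially automatic once one observes that the given series is already the Fourier cosine expansion of $f$, so that Lemma \ref{cj} can be applied verbatim.
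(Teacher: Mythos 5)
Your proof is correct and follows essentially the same strategy as the paper's: for the non-membership you invoke Lemma \ref{cj} with exponent $\beta$ exactly as the paper does, and for the $C^{0,\alpha}$ bound you split the series at the same adaptively chosen cutoff (your $N$ is the paper's $k$), estimating the head by a Lipschitz-type bound on each cosine and the tail by boundedness, yielding the same two geometric sums. The only cosmetic difference is that you use the mean value theorem where the paper uses the product-to-sum identity together with $|\sin x|\le|x|$.
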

\begin{proof} The fact that $f\notin C^{0,\beta}(\mathbb{R})$ for $\beta>\alpha$ follows by Lemma \ref{cj}. Indeed, $f(t)=\sum_{k=0}^\infty c_k\cos(kt)$ with $c_k:=k^{-\alpha}$ if $k=b^j$ for some $j\in\mathbb{N}$ and $c_k:=0$ otherwise.

To prove that $f$ belongs to $C^{0,\alpha}(\mathbb{R})$ we take $-1<\delta<1$ and we write 
\[
\begin{split}
f(t+\delta)-f(t)&=\sum_{j=0}^\infty b^{-j\alpha}\left(\cos(b^jt+b^j\delta)-\cos(b^jt)\right)\\
&=-2\sum_{j=0}^\infty b^{-j\alpha} \sin\Bigl(b^jt+\frac{1}{2}b^j\delta\Bigr)\sin\Bigl(\frac{1}{2}b^j\delta\Bigr)\,,
\end{split}
\]
so that 
\begin{equation}\label{weierstrass.eq1}
\left|f(t+\delta)-f(t)\right|\le 2\sum_{j=0}^\infty b^{-j\alpha} \Bigl|\sin\Bigl(\frac{1}{2}b^j\delta\Bigr)\Bigr|\,.
\end{equation}
Then we remember that $|\delta|<1<b$ and we take $k\in\mathbb{N}$ such that 
\begin{equation}\label{weierstrass.eq1.1}
b^k|\delta|\le 1< b^{k+1}|\delta|\,.
\end{equation}
We split in two the series in the right-hand side of \eqref{weierstrass.eq1} and we write
\begin{equation}\label{weierstrass.eq2}
\begin{split}
\left|f(t+\delta)-f(t)\right| &\le 2\sum_{j=0}^k b^{-j\alpha}  \Bigl|\sin\Bigl(\frac{1}{2}b^j\delta\Bigr)\Bigr|+2\sum_{j=k+1}^\infty b^{-j\alpha}  \Bigl|\sin\Bigl(\frac{1}{2}b^j\delta\Bigr)\Bigr|\\
&\le \sum_{j=0}^k b^{j(1-\alpha)} |\delta|+2\sum_{j=k+1}^\infty b^{-j\alpha}\,.
\end{split}
\end{equation}
For the first term in the right-hand side we have that 
\begin{equation}\label{weierstrass.eq3}
\begin{split}
\sum_{j=0}^k b^{j(1-\alpha)} |\delta|&=\sum_{j=0}^k (b^k |\delta|)^{1-\alpha} (b^{1-\alpha})^{j-k} |\delta|^\alpha\\
&\le  \sum_{j=0}^k  (b^{1-\alpha})^{j-k}|\delta|^\alpha\le \frac{|\delta|^\alpha}{1-b^{\alpha-1}}
\end{split}
\end{equation}
(here we use the first inequality of \eqref{weierstrass.eq1.1}).
For the second term in the right-hand side of \eqref{weierstrass.eq2} we see that 
\begin{equation}\label{weierstrass.eq4}
\sum_{j=k+1}^\infty b^{-j\alpha}=\frac{b^{-(k+1)\alpha}}{1-b^{-\alpha}}< \frac{|\delta|^\alpha}{1-b^{-\alpha}}
\end{equation}
(here we use the second inequality of \eqref{weierstrass.eq1.1}).
Then, by \eqref{weierstrass.eq2}--\eqref{weierstrass.eq4} it follows that $\left|f(t+\delta)-f(t)\right|\le C|\delta|^\alpha$ for all $-1<\delta<1$, where the constant $C>0$ does not depend on $\delta$ and $t$. Since $f$ is bounded (indeed periodic)  this inequality suffices to conclude that $f\in C^{0,\alpha}(\mathbb{R})$.
\end{proof}

As shown by Hardy in \cite{Ha16}, we can relax the assumption that $b$ is a natural number and let $b$ be any real number bigger than $1$. This generalization is however not needed in our paper.   We also observe that, always in \cite{Ha16}, Hardy proved a much stronger result for the function $f$ of Proposition \ref{weierstrass}. Namely he showed that,  for any $\beta>\alpha$, there is no $t\in\mathbb{R}$ such that $f(t+\delta)-f(t)=o(|\delta|^{\beta})$ as $\delta\to 0$.  

By Lemma \ref{cj} we can also deduce the following.

\begin{prop}\label{hardy}  Let $1<b\in\mathbb{N}$. Then the function 
\[
f(t):=\sum_{j=0}^\infty \frac{\cos(b^jt)}{j^2}\qquad\text{ of $t\in \mathbb{R}$}
\]
is continuous and does not belong to $C^{0,\beta}(\mathbb{R})$ for any  $\beta>0$.
\end{prop}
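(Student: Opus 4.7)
The plan is to handle the two assertions separately, with the continuity being immediate and the failure of H\"older regularity reducing to Lemma \ref{cj}.

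First, for continuity, I would apply the Weierstrass $M$-test. Since $|\cos(b^j t)/j^2| \le 1/j^2$ for every $t \in \mathbb{R}$ and $j \ge 1$, and $\sum_{j\ge 1} 1/j^2 < \infty$, the series defining $f$ converges uniformly on $\mathbb{R}$. Each partial sum is a finite linear combination of cosines and hence continuous, so the uniform limit $f$ is continuous on $\mathbb{R}$. (Here I am implicitly reading the sum as starting from $j=1$; the case $j=0$ either requires the same convention as Hardy's paper, or corresponds to a constant-added convention that does not affect either assertion.)

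Next, for the failure of H\"older regularity, the key observation is that $f$ is already written as a pure cosine series with non-negative, absolutely summable coefficients. More precisely, set
\[
c_k :=
\begin{cases}
1/j^2 & \text{if } k = b^j \text{ for some } j \ge 1, \\
0 & \text{otherwise},
\end{cases}
\]
so that $f(t) = \sum_{k=0}^\infty c_k \cos(kt)$, and $\sum_k |c_k| = \sum_{j\ge 1} 1/j^2 < \infty$. I would then argue by contradiction: suppose $f \in C^{0,\beta}(\mathbb{R})$ for some $\beta > 0$. Lemma \ref{cj} yields a constant $C > 0$ such that $|c_k| \le C k^{-\beta}$ for every $k \in \mathbb{N}$. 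Testing this inequality at $k = b^j$ gives $1/j^2 \le C b^{-j\beta}$, i.e., $b^{j\beta} \le C j^2$ for all $j \ge 1$.

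The main (and only) obstacle is then to note that this inequality is clearly false for large $j$, since $b > 1$ forces the left-hand side to grow exponentially in $j$ while the right-hand side grows only polynomially. Letting $j \to \infty$ produces the contradiction, and since $\beta > 0$ was arbitrary, $f \notin C^{0,\beta}(\mathbb{R})$ for any $\beta > 0$. I expect no genuine difficulties: the whole proof is a direct application of Lemma \ref{cj} together with the elementary fact that any exponential dominates any polynomial.
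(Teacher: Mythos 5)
Your proof is correct and follows essentially the same route as the paper: identify the cosine coefficients $c_k$ (nonzero only at $k=b^j$, where $c_{b^j}=1/j^2=(\log_b k)^{-2}$), apply Lemma \ref{cj}, and observe that no bound $|c_k|\le Ck^{-\beta}$ can hold because an exponential in $j$ dominates a polynomial. You merely spell out two points the paper leaves implicit---the Weierstrass $M$-test for continuity and the explicit contradiction at $k=b^j$---and you are right to flag the $j=0$ term, which the paper's displayed formula leaves formally undefined; starting the sum at $j=1$ (or discarding the constant term) is the intended reading and changes nothing.
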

\begin{proof} Indeed, $f(t)=\sum_{k=0}^\infty c_k\cos(kt)$ with $c_k:=(\log_b k)^{-2}$ if $k=b^j$ for some $j\in\mathbb{N}$ and $c_k:=0$ otherwise. Since 
there are no $C>0$ and no $0<\alpha\le 1$ such that  $|c_k|\le Ck^{-\alpha}$ for all $k\in\mathbb{N}$, the statement follows by  Lemma \ref{cj}.
\end{proof}

\subsection{Some potential theory} We  shall also need some basic result of potential theory. Most of the  proofs can be found in monographs that present an introduction to the subject,  for example in Folland \cite[Ch.~3]{Fo95} and \cite{DaLaMu21}. When it is not like that, we will indicate a specific reference. 

We denote by $S_n$ the usual fundamental solution of the Laplace operator on $\mathbb{R}^n$. That is, the function defined by
\[
S_n(x):=
\begin{cases}
\frac{\ln|x|}{2\pi}&\text{if }n=2\,,\\
\frac{|x|^{2-n}}{s_n(2-n)}&\text{if }n\ge 3
\end{cases}
\]
for all $x\in\mathbb{R}^n\setminus\{0\}$, where $s_n$ is the $(n-1)$-dimensional measure of $\mathbb{S}_{n-1}$. Then, if $\Omega$ is a bounded open smooth subset of $\mathbb{R}^n$,  we denote by $\nu_\Omega$ the outward unit normal to $\partial\Omega$ and we set
\[
v[\phi](x):=\int_{\partial\Omega}\phi(y)S_n(x-y)d\sigma_y\qquad\text{for all }x\in\mathbb{R}^n
\]
and 
\[
w[\phi](x):=-\int_{\partial\Omega}\phi(y)\,\nu_{\Omega}(y)\cdot\nabla S_n(x-y)d\sigma_y\qquad\text{for all }x\in\mathbb{R}^n
\]
for all continuous function $\phi$ on $\partial\Omega$. $v[\phi]$ and $w[\phi]$ are respectively the single layer potential and the double layer potential with density $\phi$. We can see that both $v[\phi]$ and $w[\phi]$ are harmonic in $\mathbb{R}^n\setminus\partial\Omega$. Moreover, $v[\phi]$ is continuous on the whole of $\mathbb{R}^n$, whereas $w[\phi]$ has a discontinuity on $\partial\Omega$ (unless $\phi=0$). Nevertheless, the restriction of $w[\phi]$ to $\Omega$ has a continuous extension $w^+[\phi]$ to the closure $\overline{\Omega}$ and the restriction of $w[\phi]$ to $\mathbb{R}^n\setminus\overline\Omega$ has a continuous extension $w^-[\phi]$ to  $\overline{\mathbb{R}^n\setminus\Omega}=\mathbb{R}^n\setminus\Omega$. On the boundary of $\Omega$ we have a ``jump." Namely,
\[
w^+[\phi](x)-w^-[\phi](x)=\phi(x)\quad\text{for all }x\in\partial\Omega
\]
and, if we denote by $W[\phi]$ the restriction of (the discontinuous function) $w[\phi]$ to $\partial\Omega$ we have
\begin{equation}\label{jumps}
w^\pm[\phi]=\pm\frac{1}{2}\phi+W[\phi]\quad\text{on }x\in\partial\Omega\,.
\end{equation}
Schauder \cite{Sc31, Sc32} proved that $\phi\mapsto W[\phi]$ is compact on $C^{0,\alpha}(\partial\Omega)$ for all  $0\le\alpha<1$ 
and thus $\frac{1}{2}I+W$ is a Fredholm operator of index $0$ on $C^{0,\alpha}(\partial\Omega)$. If we further assume that $\mathbb{R}^n\setminus\overline\Omega$ consists of exactly one connected component, than it is possible to prove that $\frac{1}{2}I+W$ is an isomorphism from $C^{0,\alpha}(\partial\Omega)$ to itself. This, together with the jump relations \eqref{jumps} yields 
\begin{prop}\label{dirichlet} Let $0\le\alpha<1$. Let $\Omega$  be a bounded open smooth subset of $\mathbb{R}^n$ such that $\mathbb{R}^n\setminus\overline\Omega$ is connected. Then, for all $f\in C^{0,\alpha}(\partial\Omega)$ the (unique) solution of the Dirichlet problem 
\[
\Delta u=0\text{ in }\Omega\,,\quad u=f\text{ on }\partial\Omega
\]
exists, belongs to $C^{0,\alpha}(\overline\Omega)$, and can be obtained as a double layer potential $w^+[\mu]$ with density   $\mu\in C^{0,\alpha}(\partial\Omega)$.
\end{prop}
\begin{proof} Since  $\mathbb{R}^n\setminus\overline\Omega$ is connected $\frac{1}{2}I+W$ is an isomorphism from $C^{0,\alpha}(\partial\Omega)$ to itself and thus there exists $\mu\in C^{0,\alpha}(\partial\Omega)$ such that $w^+[\mu]_{|\partial\Omega}=\frac{1}{2}\mu+W[\mu]=f$. Then $w^+[\mu]$ is the unique solution of the Dirichlet boundary value problem (uniqueness comes from the maximum principle) and Miranda \cite[Theorem 2.I]{Mi65} proves that $w^+[\mu]$ belongs to $C^{0,\alpha}(\overline\Omega)$ (see also \cite[Theorem 4.17]{DaLaMu21} for a ``modern'' presentation of Miranda's result).
\end{proof}
In Proposition \ref{dirichlet} the assumption that $\mathbb{R}^n\setminus\overline\Omega$ is connected could be dropped if we write the solution as a combination of a double layer and a single layer potentials (see \cite{Fo95} and \cite{DaLaMu21}). In this paper we don't need such generality though. 

\section{The bounds for $\sigma$ are sharp}\label{sharp}

Here we see that the bounds $\sigma>0$ and $\sigma\ge \alpha$ in condition \eqref{p2} are sharp once the function $u$ satisfies condition \eqref{p1}. To deal with the case where $\alpha>0$, we use the following classical lemma (see, for example,  \cite[Lemma 8.3.(i)]{Wo03}). We include a proof for the sake of completeness. 

\begin{lem}\label{CinH} Let $0<\alpha<1$. If $\Omega$ is a smooth  open and bounded  subset of $\mathbb{R}^n$, then $C^{0,\alpha}(\partial\Omega)\subset H^\sigma(\partial\Omega)$ for all $0<\sigma<\alpha$.
\end{lem}
\begin{proof} Let $v$ be a function of $C^{0,\alpha}(\partial\Omega)$. Let $d:=\mathrm{diam}\,\Omega$ be the diameter of $\Omega$, let $\mathbb{B}_n(y,r):=y+r\mathbb{B}_n$ be the open ball with center $y$ and radius $r>0$ and let $\mathbb{A}_n(y,r,R)$ be the open annulus $\mathbb{B}_n(y,R)\setminus\overline{\mathbb{B}_n(y,r)}$. Then we have
\[
\begin{split}
&\int_{\partial\Omega}\int_{\partial\Omega}\frac{|v(x)-v(y)|^2}{|x-y|^{n-1+2\sigma}}d\sigma_xd\sigma_y\lesssim \int_{\partial\Omega}\int_{\partial\Omega}\frac{1}{|x-y|^{n-1+2(\sigma-\alpha)}}d\sigma_xd\sigma_y\\
&\quad=\int_{\partial\Omega}\sum_{j=0}^{+\infty}\int_{\partial\Omega\cap\mathbb{A}_n(y,d/2^{j+1},d/2^j)}\frac{1}{|x-y|^{n-1+2(\sigma-\alpha)}}d\sigma_xd\sigma_y\\
&\quad\lesssim \int_{\partial\Omega}\sum_{j=0}^{+\infty}2^{j(n-1+2(\sigma-\alpha))}\left|\partial\Omega\cap\mathbb{B}_n(y,d/2^j)\right|d\sigma_y\\
&\quad\lesssim \sum_{j=0}^{+\infty}2^{j(n-1+2(\sigma-\alpha))}(d/2^j)^{n-1}= \sum_{j=0}^{+\infty}2^{2j(\sigma-\alpha)}
\end{split}
\]
and the last series converges as soon as $\sigma<\alpha$. In the last inequality we have used the fact that $\left|\partial\Omega\cap\mathbb{B}_n(y,r)\right|\lesssim r^{n-1}$ when the  boundary $\partial\Omega$ is smooth enough.
\end{proof}

Then we deduce the following.

\begin{prop}\label{prop:sharp} Let $\Omega$  be an open bounded smooth  subset of $\mathbb{R}^n$. If $u$ is harmonic in $\Omega$ and continuous on $\overline\Omega$, then it belongs to $H^{1/2}(\Omega)$. If $u$ is harmonic in $\Omega$ and belongs to $C^{0,\alpha}(\overline\Omega)$ for some $0<\alpha<1$, then it belongs to $H^{\sigma+1/2}(\Omega)$  for all $0<\sigma<\alpha$.
\end{prop}
\begin{proof} For $\alpha=0$ the statement follows by the regularity result for very weak solutions of the Dirichlet problem of Ne\v{c}as \cite[Chap.~6.1.2, Thm.~1.3]{Ne12} and by the characterization of harmonic functions in $H^{1/2}(\Omega)$ of Jerison and Kenig \cite[Thm.~4.1]{JeKe95}. For the case where $\alpha>0$, we see that   $u=w^+[\mu]$ for some $\mu\in C^{0,\alpha}(\partial\Omega)$ by Proposition \ref{dirichlet}. Then the statement is a consequence of Lemma \ref{CinH} and of the mapping properties of the double layer potential (see, e.g., Costabel \cite[Thm.~1]{Co88}).
\end{proof}

\section{First attempts}\label{firstattempts}

\subsection{With a random sequence of spherical harmonics}\label{random}

Inspired by the work of Hadamard \cite{Ha06} we imitate formula  \eqref{hadamard.1} and we look at functions in the form 
\begin{equation}\label{idea}
u(x):=\sum_{k=0}^\infty{a_k}r^k\, Y_k(\theta)\quad\text{for }x\in \overline{\mathbb{B}_n}\setminus\{0\}\qquad\text{and }u(0):=a_0Y_0\,,
\end{equation}
where $\{Y_k\}_{k}$ is a sequence of spherical harmonics with $Y_k\in H_k$ (we recall that $Y_0$ is a constant function) and $\{a_k\}_k$ is a suitable real sequence. As usual, $r=|x|$ and $\theta=x/|x|$. Our goal is to choose $\{Y_k\}_{k}$ and $\{a_k\}_k$ such that the series in \eqref{idea} converges normally with respect to $\|\cdot\|_\infty$, that is 
\begin{equation}\label{1st}
\sum_{k=0}^\infty |a_k|\|Y_k\|_\infty\text{ converges,}
\end{equation}
and, simultaneously, the series 
\[
\sum_{k=0}^\infty(k(k+n-2))^\sigma\sum_{j=1}^{d_k}|\langle  u_{|\mathbb{S}_{n-1}},Y_{k,j}\rangle|^2
\]
does not converge for $\sigma$ big enough. Let's say, for $\sigma>\sigma_0$. Here above  $\{Y_{k,1},\dots,Y_{k,d_k}\}$ is an orthonormal basis of $H_k$ and we can assume that 
 \[
 Y_{k,1}=\frac{Y_k}{\|Y_k\|_2}\,.
 \]
 Then we can rewrite the second condition  as
 \begin{equation}\label{2nd}
 \sum_{k=0}^\infty k^{2\sigma} a_k^2 \|Y_{k}\|_2^2\quad\text{does not to converge for $\sigma>\sigma_0$.} 
 \end{equation}
If conditions \eqref{1st} and \eqref{2nd} are verified, then $u$ is a continuous harmonic function on $\overline{\mathbb{B}_n}$ and, by \eqref{Hs}, $u_{|\mathbb{S}_{n-1}}\notin H^{\sigma}(\mathbb{S}_{n-1})$ for any $\sigma>\sigma_0$. Then we conclude by Corollary \ref{tracecor} that $u$ is not in $H^{\sigma+1/2}(\mathbb{B}_n)$ for any $\sigma>\sigma_0$. 

Our goal is to have the bound $\sigma_0$ as small as possible and clearly the ratio between  the sup-norm $\|Y_k\|_\infty$ and the two-norm $\|Y_k\|_2$ plays an important role in this task.  With the exception of the case where $n=2$, such ratio is expected to grow when $k\to\infty$. Then, to have $\sigma_0$ small, we have to chose a sequence $\{Y_k\}_k$ for which the ratio grows as slow as possible.  By the asymptotic formula \eqref{dksim} and by inequality \eqref{inf<2}, we can see that for any sequence of  spherical harmonics $\{Y_k\}_k$ with $Y_k\neq 0$ we  have
\begin{equation}\label{anyYk}
\frac{\|Y_k\|_\infty}{\|Y_k\|_2}=O(k^{\frac{n-2}{2}})\quad\text{as }k\to\infty\,,
\end{equation}
which, for our purposes, is very good when $n=2$, but gets worse when $n$ is bigger. The growth relation \eqref{anyYk} is attained by the zonal spherical harmonics $Z_k$, for which 
\[
\frac{\|Z_k\|_\infty}{\|Z_k\|_2}\sim k^{\frac{n-2}{2}}\quad\text{as }k\to\infty\,.
\]
If in \eqref{idea} we use one such a sequence of spherical harmonics we obtain the following proposition, where we find convenient to normalize the sequence of spherical harmonics with respect to the infinity norm. Also, we modify the sequence \eqref{hadamard.2} of Hadamard to obtain a better bound for $\sigma$. 
 
\begin{prop}\label{withanyYk} Let $\{Y_k\}_k$ be a sequence of spherical harmonics with $Y_k\in H_k$ and $\|Y_k\|_\infty=1$. Let
\begin{equation}\label{ak}
a_k:=
\begin{cases}
0&\text{if $k\neq 2^j$ for all $j\in \mathbb{N}\setminus\{0\}$}\,,\\
j^{-2}&\text{if $k= 2^j$ for some $j\in \mathbb{N}\setminus\{0\}$}\,.
\end{cases}
\end{equation}
 Let $u$ be as in \eqref{idea} with $\{a_k\}_k$ as above. Then  $u$ is harmonic in ${\mathbb{B}_n}$,  continuous on $\overline{\mathbb{B}_n}$, and does not belong to  $H^{\sigma+1/2}(\mathbb{B}_n)$ for any $\sigma>(n-2)/2$. Moreover, $u$ does not belong to $C^{0,\alpha}(\overline{\mathbb{B}_n})$ for any $\alpha>(n-2)/2$.
\end{prop}
\begin{proof} It suffices to check that  conditions \eqref{1st} and \eqref{2nd} hold with $\sigma_0=(n-2)/2$.  For \eqref{1st} we observe that $\sum_{k=0}^\infty |a_k|\|Y_k\|_\infty$ is smaller than a constant times
\[
\sum_{j=1}^\infty\frac{1}{j^2}=\frac{\pi^2}{6}<+\infty\,.
\]
(Incidentally, we remember that $\sum_{j=1}^\infty1/j^2=\pi^2/6$ is the  Basel problem sum.)
For \eqref{2nd},  we see that  $\sum_{k=0}^\infty k^{2\sigma} a_k^2 \|Y_k\|_2^2$ is bigger than a constant times
\[
\sum_{j=0}^\infty \frac{2^{\left(2\sigma-n+2\right)j}}{j^4}\,,
\]
and the last series diverges as soon as $\sigma>(n-2)/2$. The last statement of the proposition is a consequence of 
Lemma \ref{CinH}. \end{proof}

If we had used the sequence $\{a_k\}_k$ in \eqref{hadamard.2},  we would have obtained $(n-1)/2$ as a bound for $\sigma$, which is bigger (and thus worse) than the bound $(n-2)/2$ obtained with \eqref{ak}. That having been said, the bound $(n-2)/2$ is fine only  for $n=2$ and is far from the expected sharp bound when $n\ge 3$. Also,  it is not clear how we can  check if $u$ belongs to $C^{0,\alpha}(\overline{\mathbb{B}_n})$ for some positive $\alpha$. Lemma \ref{CinH} only grants that $u$ is not in $C^{0,\alpha}(\overline{\mathbb{B}_n})$ for $\alpha$ big enough. 

However, there is a better result if we replace the random sequence $\{Y_k\}_k$ of Proposition \ref{withanyYk} with the sequence of highest-weight spherical harmonics. 

\subsection{With the highest-weight spherical harmonics}\label{highest}

The sequence $\{Q_k\}_k$ of highest-weight spherical harmonics is defined by 
\[
 Q_k(\theta):=\mathfrak{R}(\theta_1+i\theta_2)^k\quad\text{for all }\theta=(\theta_1,\dots,\theta_n)\in\mathbb{S}_{n-1}
\]
(see Sogge \cite{So15}). For all $k\in\mathbb{N}$ we have that $Q_k\in H_k$ and clearly it is 
\[
\| Q_k\|_\infty=1\,.
\]
We can also see that
\[
\| Q_k\|_2^2=\frac{\pi^{\frac{n}{2}}k!}{\Gamma\bigl(k+\frac{n}{2}\bigr)}
\]
(the reader can find in Folland \cite{Fo01} some helpful hint  for this computation)\,.
Then, using the Stirling formula 
\[
\Gamma(t+1)\sim\sqrt{t}\left(\frac{t}{e}\right)^t\quad\text{for }t\to+\infty\,,
\]
we can verify that 
\begin{equation}\label{asymptotic}
\| Q_k\|_2\sim k^{\frac{2-n}{4}}\quad\text{for }k\to+\infty\,.
\end{equation}
So, for the sequence $\{Q_k\}_k$  the ratio $\| Q_k\|_\infty/\| Q_k\|_2$ grows like 
\[
k^{\frac{n-2}{4}}
\]
as $k\to \infty$, which is a slower rate with respect to \eqref{anyYk}.
With the sequence $\{Q_k\}_k$ we can prove the following
\begin{prop}\label{notCbeta>0} Let $\{a_k\}_k$ be defined as in \eqref{ak}. Let $u$ be the function from $\overline{\mathbb{B}_n}$ to $\mathbb{R}$ defined by
\begin{equation}\label{ux.2}
u(x):=\sum_{k=0}^\infty a_k\,\mathfrak{R}(x_1+ix_2)^k=\sum_{j=1}^\infty \frac{1}{j^2}\,\mathfrak{R}(x_1+ix_2)^{2^j}
\end{equation}
for all $x=(x_1,\dots,x_n)\in\overline{\mathbb{B}_n}$. Then $u$ is harmonic in ${\mathbb{B}_n}$,  continuous on $\overline{\mathbb{B}_n}$, does not belong to $C^{0,\epsilon}(\overline{\mathbb{B}_n})$ for any $\epsilon>0$, and does not belong to $H^{\sigma+1/2}(\mathbb{B}_n)$ for any $\sigma>(n-2)/4$.
\end{prop}
\begin{proof}  
To prove that $u$ is harmonic, continuous on $\overline{\mathbb{B}_n}$, and  not in $H^{\sigma+1/2}(\mathbb{B}_n)$ for $\sigma>(n-2)/4$ we check that conditions \eqref{1st} and \eqref{2nd} are verified with 
\[
Y_k(\theta)=Q_k(\theta)=\mathfrak{R}(\theta_1+i\theta_2)^k\quad\text{for all }\theta=(\theta_1,\dots,\theta_n)\in\mathbb{S}_{n-1}\,,
\]
$\{a_k\}_k$ as in \eqref{ak}, and $\sigma_0=(n-2)/4$.  For \eqref{1st} we observe that 
\begin{equation}\label{notCbeta>0.eq1}
\sum_{k=0}^\infty |a_k|\|Q_k\|_\infty=\sum_{j=1}^\infty\frac{1}{j^2}=\frac{\pi^2}{6}<+\infty\,.
\end{equation}
For \eqref{2nd},  we see that  
\[
 \sum_{k=0}^\infty k^{2\sigma} a_k^2 \|Q_{k}\|_2^2= \sum_{j=1}^\infty \frac{ 2^{2\sigma j}}{j^4} \|Q_{2^j}\|_2^2
\]
and by \eqref{asymptotic} the $j$-{th} term of the last series is asymptotic to 
\[
\frac{2^{\left(\sigma-\frac{n-2}{4}\right)2j}}{j^4}
\]
as $j\to\infty$. Since the quotient above
diverges for $\sigma>(n-2)/4$, condition \eqref{2nd} is verified.

It remains to prove that $u$ is not in  $C^{0,\epsilon}(\overline{\mathbb{B}_n})$ for any $\epsilon>0$. Let $\psi$ be the map from $\mathbb{R}$ to $\mathbb{S}_{n-1}$ that takes $t$ to 
\[
(\cos t,\sin t,0,\dots,0)\,.
\]
By Proposition \ref{hardy} (and by a standard argument based on Euler formula) we can verify that the composition $u\circ\psi$ 
does not belong to $C^{0,\epsilon}(\mathbb{R})$ for any $\epsilon>0$. Since $\psi$ is smooth (real analytic indeed), we conclude that   $u$ is not in $C^{0,\epsilon}(\overline{\mathbb{B}_n})$.
\end{proof}

We observe that in the proof of Proposition \ref{notCbeta>0} we did not use Lemma \ref{CinH} to show that $u$ is not in $C^{0,\epsilon}(\overline{\mathbb{B}_n})$, this would yield the result for $\epsilon>(n-2)/4$ and not $\epsilon>0$. Also, we might have been tempted to adopt a different strategy. Namely, to prove first that the function $u$ is not in $C^{0,\epsilon}(\mathbb{R})$ for any $\epsilon>0$, and then to recover the result concerning the membership in $H^{\sigma+1/2}(\mathbb{B}_n)$ by the  Morrey inequality for fractional order Sobolev spaces (see, e.g., Di Nezza, Palatucci, and Valdinoci \cite[Theorem 8.2]{DiEtAl12}). Doing so, however, we would obtain that $u$ is not in $H^{\sigma+1/2}(\mathbb{B}_n)$ for $\sigma>(n-1)/2$, and the bound $(n-1)/2$ is  worse than the bound $(n-2)/4$ that we find.

With different sequences $\{a_k\}_k$ we can obtain harmonic functions that are in $C^{0,\alpha}(\overline{\mathbb{B}_n})$ for some positive $\alpha$ but not in $C^{0,\alpha+\epsilon}(\overline{\mathbb{B}_n})$ for $\epsilon>0$ and are not in $H^{\sigma+1/2}(\mathbb{B}_n)$ for $\sigma\ge \alpha+(n-2)/4$. We show this in the following theorem, where we exploit Theorem  \ref{weierstrass} on the Weierstrass-like functions.

\begin{prop}\label{notCbeta>alpha} Let $0<\alpha<1$. Let
\begin{equation}\label{ak.holder}
a_k:=
\begin{cases}
0&\text{if $k\neq 2^j$ for all $j\in \mathbb{N}\setminus\{0\}$}\,,\\
2^{-j\alpha}&\text{if $k= 2^j$ for some $j\in \mathbb{N}\setminus\{0\}$}\,.
\end{cases}
\end{equation}
Let $u$ be the function from $\overline{\mathbb{B}_n}$ to $\mathbb{R}$ defined by
\begin{equation}\label{ux.holder}
u(x):=\sum_{k=0}^\infty a_k\,\mathfrak{R}(x_1+ix_2)^k=\sum_{j=1}^\infty \frac{1}{2^{j\alpha}}\,\mathfrak{R}(x_1+ix_2)^{2^j}
\end{equation}
for all $x=(x_1,\dots,x_n)\in\overline{\mathbb{B}_n}$. Then $u$ is harmonic in ${\mathbb{B}_n}$,  H\"older continuous with exponent $\alpha$ on $\overline{\mathbb{B}_n}$, does not belong to $C^{0,\alpha+\epsilon}(\overline{\mathbb{B}_n})$ for any $\epsilon>0$, and does not belong to $H^{\sigma+1/2}(\mathbb{B}_n)$ for any $\sigma\ge\alpha+(n-2)/4$.
\end{prop}
\begin{proof} As in the proof of Proposition \ref{notCbeta>0} we begin by noting that 
\begin{equation}\label{notCbeta>alpha.eq1}
\sum_{k=0}^\infty |a_k|\|Q_k\|_\infty=\sum_{j=1}^\infty\frac{1}{2^{j\alpha}}=\frac{1}{2^\alpha-1}<+\infty\,.
\end{equation}
Thus the series in \eqref{ux.holder} converges normally  with respect to $\|\cdot\|_\infty$ and the function $u(x)$ is harmonic in $\mathbb{B}_n$ and continuous in $\overline{\mathbb{B}_n}$. 

Next, by Proposition \ref{weierstrass} (and by a standard argument based on Euler formula) we see that the map from $\mathbb{S}_1$ to $\mathbb{R}$ that takes $(x_1,x_2)$  to $\sum_{k=0}^\infty a_k\,\mathfrak{R}(x_1+ix_2)^k$ is H\"older continuous with exponent $\alpha$.  We also note that $(x_1,x_2)\mapsto \sum_{k=0}^\infty a_k\,\mathfrak{R}(x_1+ix_2)^k$ is harmonic and, since the solution of the Dirichlet problem in $\mathbb{B}_2$ with a boundary datum in $C^{0,\alpha}(\mathbb{S}_1)$ exists, is unique, and of class $C^{0,\alpha}(\overline{\mathbb{B}_2})$, we deduce that for $n=2$ the function $u$ belongs to $C^{0,\alpha}(\overline{\mathbb{B}_2})$. Since, for $n\ge 3$, $u$ does not depend on $(x_3,\dots,x_n)$, we conclude that $u$ belongs to $C^{0,\alpha}(\overline{\mathbb{B}_n})$ for all $n\ge 2$. To prove that $u$ does not belong to $C^{0,\alpha+\epsilon}(\overline{\mathbb{B}_n})$ for $\epsilon>0$, we argue as we did in the proof of Theorem \ref{notCbeta>0} to prove that $u$ is not in $C^{0,\epsilon}(\overline{\mathbb{B}_n})$ for $\epsilon>0$, we only have to use Proposition \ref{weierstrass} instead of Proposition \ref{hardy}. 

Finally, to verify that $u\notin H^{\sigma+1/2}(\mathbb{B}_n)$ for $\sigma\ge \alpha+(n-2)/4$ it remains to check condition \eqref{2nd} (with $\sigma\ge \sigma_0$ instead of $\sigma>\sigma_0)$. We see that  
\[
 \sum_{k=0}^\infty k^{2\sigma} a_k^2 \|Q_{k}\|_2^2= \sum_{j=1}^\infty 2^{2j(\sigma-\alpha)} \|Q_{2^j}\|_2^2
\]
and by \eqref{asymptotic} the $j$-{th} term of the last series is asymptotic to 
\[
\left(2^{\left(\sigma-\alpha-\frac{n-2}{4}\right)}\right)^{2j}
\]
as $j\to\infty$. Then condition \eqref{2nd} is verified  for $\sigma-\alpha-\frac{n-2}{4}\ge 0$. That is, $\sigma\ge \alpha+(n-2)/4$.
 \end{proof}
 
 The bounds for $\sigma$ obtained in Propositions \ref{notCbeta>0} and \ref{notCbeta>alpha} are better than those in Proposition \ref{withanyYk}, but still, they are sharp only when $n=2$. Also, using the highest-weight spherical harmonics we could keep  the H\"older continuity in check. 
 
 In the following section we see two different ways to obtain the sharp bounds.

\section{Two examples}\label{examples}

\subsection{With a generic sequence of spherical harmonics}\label{generic} We have seen in Section \ref{firstattempts} that the bounds for $\sigma$ are related to  the ratio between the sup-norm and the the two-norm of the sequence $\{Y_k\}_k$ that we use in the construction of the function $u$. The slower this ratio grows as $k$ tends to $\infty$, the smaller, and thus better, is the bound for $\sigma$. For a random sequence of spherical harmonics we can only say that the ratio grows slower than $k^{\frac{n-2}{2}}$. The situation is a little better for the highest-weight spherical harmonics, whose ratio grows like $k^{\frac{n-2}{4}}$.  Fortunately, the growth rate of a generic sequence of spherical harmonics is even better. More precisely,  a result of Neuheisel \cite{Ne00} shows that for almost all sequences of spherical harmonics the ratio between the sup-norm and the two-norm grows like $\sqrt{\ln k}$.

In order to present properly  the result of Neuheisel we have  to introduce some notation. So, we equip the (finite dimensional) space $H_k$ with the norm induced by $L^2(\mathbb{S}_{n-1})$ and we denote by $\mathbb{S}_{H_k}$ the corresponding unit ball in $H_k$. Namely, $\mathbb{S}_{H_k}$ consists  of the spherical harmonics $Y\in H_k$ with $\|Y\|_2=1$. Then $\mathbb{S}_{H_k}$ is a sphere in a space of dimension $d_k$ and we can define on it the usual Lebesgue measure $m_k$, which we normalize to be a probability measure:
\[
m_k(\mathbb{S}_{H_k})=1\,.
\]
We also set $\mathbb{S}_{H_\infty}:=\prod_{k=1}^\infty \mathbb{S}_{H_k}$ and $m_\infty:=\prod_{k=1}^\infty m_k$. Then we have the following.

\begin{thm}[Neuheisel \cite{Ne00}]\label{Neu} For $m_\infty$-almost all sequences $\{Y_k\}_k$ with $Y_k\in \mathbb{S}_{H_k}$ we have
\begin{equation}\label{Neu.eq1}
\|Y_k\|_\infty=O(\sqrt{\ln k})\quad\text{as }k\to\infty\,.
\end{equation}
\end{thm}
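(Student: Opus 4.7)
The plan is to prove the result by a standard Borel--Cantelli argument applied to sharp tail estimates for $\|Y\|_\infty$ under the uniform measure $m_k$ on $\mathbb{S}_{H_k}$. Concretely, I aim to show that for some constant $C_0>0$ (depending only on $n$) and all sufficiently large $k$,
\[
m_k\bigl(\{Y\in\mathbb{S}_{H_k}\,:\;\|Y\|_\infty>C_0\sqrt{\ln k}\}\bigr)\le k^{-2}.
\]
Since the series of these probabilities converges, the Borel--Cantelli lemma applied to the product measure $m_\infty=\prod_k m_k$ on $\mathbb{S}_{H_\infty}$ implies that for $m_\infty$-almost every sequence $\{Y_k\}_k$ the inequality $\|Y_k\|_\infty\le C_0\sqrt{\ln k}$ fails only for finitely many $k$, which is precisely \eqref{Neu.eq1}.

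The first ingredient is a \emph{pointwise tail bound} for fixed $\theta\in\mathbb{S}_{n-1}$. Writing a random $Y\in \mathbb{S}_{H_k}$ as $Y=\sum_{j=1}^{d_k} c_j Y_{k,j}$ with $c=(c_1,\dots,c_{d_k})$ uniformly distributed on the unit sphere of $\mathbb{R}^{d_k}$, the addition formula (the same one behind \eqref{inf<2}) gives $\sum_{j=1}^{d_k} Y_{k,j}(\theta)^2=d_k/\omega_n$, so $Y(\theta)$ has the same distribution as $\sqrt{d_k/\omega_n}\,\xi_1$, where $\xi_1$ is the first coordinate of a uniformly distributed point on the unit sphere of $\mathbb{R}^{d_k}$. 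By L\'evy's concentration inequality on the sphere one has $\Pr(|\xi_1|>s)\le 2e^{-(d_k-1)s^2/2}$, so setting $s=t\sqrt{\omega_n/d_k}$ yields a sub-Gaussian estimate $\Pr(|Y(\theta)|>t)\le 2e^{-ct^2}$ with $c>0$ independent of $k$ (provided $d_k\ge 2$, which holds for large $k$ by \eqref{dksim}).

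The second ingredient is a discretization via a Bernstein-type estimate. Since every $Y\in H_k$ is the restriction of a harmonic polynomial of degree $k$, the spherical Bernstein inequality gives $\|\nabla_S Y\|_\infty\le C_1 k\,\|Y\|_\infty$. Fix a $c_1/k$-net $\mathcal{N}_k\subset\mathbb{S}_{n-1}$ of cardinality $|\mathcal{N}_k|\le C_2 k^{n-1}$. Choosing $c_1$ small enough one obtains $\|Y\|_\infty\le 2\max_{\theta\in\mathcal{N}_k}|Y(\theta)|$ for all $Y\in H_k$. A union bound then gives
\[
m_k\bigl(\{Y\in \mathbb{S}_{H_k}\,:\;\|Y\|_\infty>2t\}\bigr)\le 2|\mathcal{N}_k|e^{-ct^2}\le 2C_2 k^{n-1}e^{-ct^2}.
\]
Choosing $t=C_0'\sqrt{\ln k}$ with $c(C_0')^2>n+1$ makes the right-hand side $O(k^{-2})$, completing the estimate and hence the proof via Borel--Cantelli.

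The main technical obstacle is the pointwise concentration of $Y(\theta)$ around $0$ on a scale $O(1)$ rather than the trivial worst-case $O(\sqrt{d_k})$; this is where the key $\sqrt{\ln k}$ saving over the deterministic bound $\|Y_k\|_\infty=O(k^{(n-2)/2})$ of \eqref{anyYk} arises. All other steps---the addition formula, L\'evy's inequality, Bernstein on the sphere, the net argument, and Borel--Cantelli for the product measure---are standard and combine in the routine way described above.
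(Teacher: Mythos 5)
The paper states Theorem \ref{Neu} as an external result of Neuheisel \cite{Ne00} and does \emph{not} include a proof, so there is no in-paper argument to compare against. Your sketch is a correct and essentially standard proof, following the same circle of ideas as Neuheisel's thesis and the closely related earlier work of VanderKam on $L^\infty$ norms and quantum ergodicity on the sphere (Int.\ Math.\ Res.\ Not., 1997). The three ingredients you isolate are exactly the right ones: the addition formula identifies $Y(\theta)$, for fixed $\theta$, with a fixed multiple of a single coordinate of a uniform random vector on the unit sphere of $\mathbb{R}^{d_k}$; L\'evy's spherical concentration then yields a sub-Gaussian tail $\Pr(|Y(\theta)|>t)\le 2e^{-ct^2}$ with $c$ essentially independent of $k$ because $(d_k-1)/d_k\to1$ under \eqref{dksim}; and the spherical Bernstein inequality $\|\nabla_S Y\|_\infty\le C_1 k\|Y\|_\infty$ combined with a $c_1/k$-net of cardinality $O(k^{n-1})$ converts the pointwise tail into a sup-norm tail. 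The union-bound count $k^{n-1}e^{-ct^2}$ forces the threshold $t\asymp\sqrt{\ln k}$, which is precisely the content of \eqref{Neu.eq1}, and Borel--Cantelli under $m_\infty$ (applicable, since each event $\{\|Y_k\|_\infty>C_0\sqrt{\ln k}\}$ depends only on the $k$-th coordinate and the unconditional first Borel--Cantelli lemma needs only $\sum_k m_k(\cdot)<\infty$) finishes the argument. The one bookkeeping point worth double-checking in a full write-up is the normalization constant in the addition formula: $\sum_j Y_{k,j}(\theta)^2$ equals $d_k$ divided by the \emph{surface measure} of $\mathbb{S}_{n-1}$, so the symbol $\omega_n$ in your reduction (and in \eqref{inf<2}) should be read consistently with that normalization; this does not affect the order of magnitude or the conclusion.
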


Theorem \ref{Neu} has been extended in different directions. We mention, for example, the work of Feng and Zelditch \cite{FeZe14} on random holomorphic fields on  K\"ahler manifolds and the paper of Canzani and Hanin \cite{CaHa15} in the framework of compact Riemannian manifolds (see also the references therein).

Using a sequence $\{Y_k\}_k$ as in Theorem \ref{Neu},   we can now construct a harmonic function that is continuous in $\overline{\mathbb{B}_n}$  and does not belong to $H^s(\mathbb{B}_n)$ for any $s>1/2$. 

\begin{thm}\label{notHs} Let $u$ be as in \eqref{idea} with $\{a_k\}_k$ as in \eqref{ak} and with a sequence of spherical harmonics $\{Y_k\}_k$ such that  $Y_k\in\mathbb{S}_{H_k}$ (i.e. $Y_k\in H_k$ and $\|Y_k\|_{L^2(\partial\mathbb{B}_n)}=1$) and \eqref{Neu.eq1} holds true.   Then $u$ is harmonic in ${\mathbb{B}_n}$,  continuous on $\overline{\mathbb{B}_n}$, and does not belong to  $H^{\sigma+1/2}(\mathbb{B}_n)$ for any $\sigma>0$. Moreover, $u$ does not belong to $C^{0,\epsilon}(\overline{\mathbb{B}_n})$ for any $\epsilon>0$.
\end{thm}
\begin{proof} The existence of $\{Y_k\}_k$ is granted by Theorem \ref{Neu}. Then, we have to check that conditions \eqref{1st} and \eqref{2nd} are verified with $\{Y_k\}_k$ as in \eqref{Neu.eq1},
$\{a_k\}_k$ as in \eqref{ak}, and $\sigma_0=0$.  For \eqref{1st} we observe that $\sum_{k=0}^\infty |a_k|\|Y_k\|_\infty$ is smaller than (a constant times)
\[
\sum_{j=1}^\infty\frac{\sqrt{\ln 2^j}}{j^2}=\sqrt{\ln 2}\sum_{j=1}^\infty\frac{1}{j^{3/2}}<+\infty\,.
\]
For \eqref{2nd},  we see that  
\[
 \sum_{k=0}^\infty k^{2\sigma} a_k^2 \|Y_k\|_2^2= \sum_{j=0}^\infty \frac{ 2^{2\sigma j}}{j^4}\,.
\]
The last series diverges as soon as $\sigma>0$. We deduce that $u$ is harmonic in ${\mathbb{B}_n}$,  continuous on $\overline{\mathbb{B}_n}$, and does not belong to  $H^{\sigma+1/2}(\mathbb{B}_n)$ for any $\sigma>0$. Then the last statement on the H\"older continuity is a consequence of Lemma \ref{CinH}.
\end{proof}

\subsection{Passing through a different domain}\label{domain} Let us now think about the function $u$ of Proposition \ref{notCbeta>0} or, equivalently, that of Proposition  \ref{notCbeta>alpha}.  We can see that for $n\ge 3$ the series 
\[
\sum_{k=0}^\infty a_k\,\mathfrak{R}(x_1+ix_2)^k\quad\text{with }x=(x_1,x_2,\dots,x_n)\in\mathbb{R}^n
\]
defines a harmonic function on the infinite open cylinder
\[
\mathbb{B}_2\times\mathbb{R}^{n-2}\,.
\]
So, the function $u$, which is defined on the closed ball $\overline{\mathbb{B}_n}$, is actually everywhere real analytic on its domain with the exception of the equator line $\mathbb{S}_1\times(0,\dots,0)$. We might say that the singularity of $u$ is solely localized on that equator line. Since we are looking for continuous harmonic functions that are, in a sense, as singular as possible, it seems a good idea to consider an extension of $u$ to a domain $\Omega$ that contains an open portion of the cylinder surface
\[
\mathbb{S}_1\times\mathbb{R}^{n-2}\,.
\]
The idea of using such a set $\Omega$ is taken from a paper of Costabel \cite{Co19}. To define $\Omega$, 
we use a function $\mu\in C^\infty(\overline{\mathbb{R}_+})$ such that $\mu=1$ in $[0,R]$ for some $R>0$, $\mu>0$ in $(0,2R)$, $\mu<0$ in $(2R,\infty)$, and $\mu'(2R)<0$.  We may for example take
\begin{equation}\label{supermu}
\mu(t):=
\begin{cases}
1&\text{for }0\le t\le R\,,\\
1-e^{\frac{t-2R}{t-R}}&\text{for }t>R\,.
\end{cases}
\end{equation}
Then we set
\[
\Omega:=\left\{ (x_1, x_2, x'')\in\mathbb{R}^n : x_1^2+x_2^2<  \mu(|x''|^2) \right\}.
\]
\begin{figure}
\center
\begin{tikzpicture}         
  \begin{axis}[
  	colormap={blackwhite}{gray(0cm)=(0); gray(1cm)=(1)},
        axis equal image,
        grid = both,
        axis lines = center,
      ticks=none,
        xlabel = {$x_1$},
        ylabel = {$x_2$},
        zlabel = {$x_3$},
        major grid style = {draw = lightgray},
        minor grid style = {draw = lightgray!25},
        legend cell align={left},
        xmin = -2.5, xmax = 2.5,
        ymin = -4.2, ymax = 4.2,
        scale = 3,
        zmin = -5, zmax = 5,
        z buffer = sort,
        every axis/.append style={font=\tiny},
    ]
  	\addplot3[
	opacity=0.9,
            surf,
            shader=interp,
            samples=50,
            domain=-2:2, y domain=0:2*pi,
            z buffer=sort]
        ({1 * cos(deg(y))}, {1 * sin(deg(y))},\x);
        \addplot3[
        opacity=0.8,
            surf,
            shader=interp,
            samples=50,
            domain=2:4, y domain=0:2*pi,
            z buffer=sort]
        ({(1/sqrt(12))*sqrt(16-\x^2) * cos(deg(y))}, {(1/sqrt(12))*sqrt(16-\x^2) * sin(deg(y))},\x); 
         \addplot3[
         opacity=0.7,
            surf,
            shader=interp,
            samples=50,
            domain=-4:-2, y domain=0:2*pi,
            z buffer=sort]
        ({(1/sqrt(12))*sqrt(16-\x^2) * cos(deg(y))}, {(1/sqrt(12)))*sqrt(16-\x^2) * sin(deg(y))},\x); 
        
        \draw[
    samples = 50,
    smooth,
    domain = 0:2,
    variable = \t,
    thick
]
plot (
   {1},{0},{\t});

          \draw[
    samples = 50,
    smooth,
    domain = 2:4,
    variable = \t,
    thick
]
plot (
 {(1/sqrt(12))*sqrt(16-\t^2)},{0}, {\t});
   
\draw[
    samples = 50,
    smooth,
    domain =0:pi,
    variable = \t,
    dashed
]
plot (
   {cos(deg(\t))},{sin(deg(\t))}, {2});
   
   \draw[
    samples = 50,
    smooth,
    domain = pi:2*pi,
    variable = \t,
]
plot (
      {cos(deg(\t))},{sin(deg(\t))},{2});

   \draw[
    samples = 50,
    smooth,
    domain = 0:pi,
    variable = \t,
    dashed
]
plot(
   {cos(deg(\t))},{sin(deg(\t))},{-2});
   
\draw[
    samples = 50,
    smooth,
    domain = pi:2*pi,
    variable = \t,
]
plot (
   {cos(deg(\t))},{sin(deg(\t))},{-2});

   \draw[
    samples = 50,
    smooth,
    domain = 0:pi,
    variable = \t,
    dashed
]
plot(
   {cos(deg(\t))},{sin(deg(\t))},{0});
   
     \draw[
    samples = 50,
    smooth,
    domain = pi:2*pi,
    variable = \t,
]
plot (
   {cos(deg(\t))},{sin(deg(\t))},{0});
   
  \end{axis}    
   \end{tikzpicture}

\caption{{\it The set  $\Omega$ for  $n=3$ and for a suitable choice of $\mu$. The line on the side is the function $\sqrt{\mu(x_3^2)}$}}\label{fig}
\end{figure}
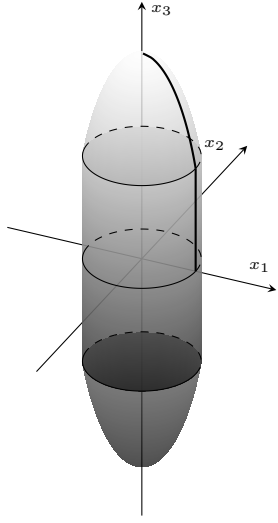 
We can verify that $\Omega$ is open, bounded, and of class $C^\infty$ (see Figure \ref{fig}). Since $\mu\leq1$ we have that 
\[
\Omega\subset \left\{(x_1,x_2,x'')\in\mathbb{R}^n : x_1^2+x_2^2<1 \right\}=\mathbb{B}_2\times\mathbb{R}^{n-2}.
\]
Moreover, the boundary $\partial\Omega=\{ x_1^2+x_2^2=\mu(|x''|^2) \}$ contains an open subset of $\mathbb{S}_1\times\mathbb{R}^{n-2}$. In particular, 
\begin{equation}\label{delta}
\mathbb{S}_1\times \left[-\delta,\delta\right]^{n-2}\subset\partial\Omega
\end{equation}
for $0<\delta\le\sqrt{{R}/({n-2})}$.
Then in $\Omega$ we can prove the following. 
\begin{thm}\label{inOmega} Let $n\ge 3$ and let $\Omega$ be as above. Let $0\le\alpha<1$. Let
\[
u(x):=\sum_{k=0}^\infty a_k\Re(x_1+ i x_2)^k\quad\text{for all }x=(x_1,x_2,\dots,x_n)\in\overline\Omega\,,
\] 
with $\{a_k\}_k$ as in \eqref{ak} if $\alpha=0$ and $\{a_k\}_k$ as in \eqref{ak.holder} if $0<\alpha<1$. Then $u$ is harmonic in $\Omega$, belongs to  $C^{0,\alpha}(\overline\Omega)$, does not belong to $C^{0,\alpha+\epsilon}(\overline\Omega)$ for any $\epsilon>0$,  and does not belong to $H^{\sigma+1/2}(\Omega)$ for any $\sigma>0$ if $\alpha=0$ and any $\sigma\ge \alpha$ if $0<\alpha<1$.
\end{thm}
In other words, $u$ satisfies conditions \eqref{p1} and \eqref{p2} with $\Omega$ replacing $\mathbb{B}_n$.  To prove Theorem \ref{inOmega} we use an elementary lemma on the trace of Sobolev functions that are constant along a direction. In what fallows we shall denote by $\mathbb{T}^m$ the torus 
\[
\mathbb{T}^m:=\mathbb{R}^m/(2\pi\mathbb{Z})^m
\]
of dimension $m\in\mathbb{N}\setminus\{0\}$, and we understand that a function on $\mathbb{T}^m$ extends to a periodic function on $\mathbb{R}^m$.

\begin{lem}\label{torustrace}
Let $m', m''\in\mathbb{N}\setminus\{0\}$. Let  $\tilde{v}$ be a function on  $\mathbb{T}^{m'}$ and  let $v(x',x''):=\tilde v(x')$ for all   $(x',x'')\in\mathbb{T}^{m'+m''}=\mathbb{T}^{m'}\times\mathbb{T}^{m''}$ (so that  $v$ is constant in the direction $x''$). Then  $v$ belongs to $H^s(\mathbb{T}^{m'+m''})$ for some $s\ge 0$ if and only if $\tilde v$ belongs to $H^s(\mathbb{T}^{m'})$ for the same $s$.
\end{lem}
\begin{proof} We can see that a function $w$ on $\mathbb{T}^m$ belongs to $H^s(\mathbb{T}^m)$ if and only if the sum 
\[
\sum_{z\in\mathbb{Z}^m}(1+|z|^2)^s\left|\langle w,e^{iz\cdot x}\rangle\right|^2
\]
is finite (see, for example, Haroske and Triebel \cite[Remark 4.26]{HaTr08}). Then we compute, 
\begin{align*}
&\sum_{z\in\mathbb{Z}^{m'+m''}}(1+|z|^2)^s\left|\langle v,e^{iz\cdot x}\rangle\right|^2\\
&\qquad=\sum_{m\in\mathbb{Z}^{m'+m''}}(1+|z|^2)^s\left| \int_{\mathbb{T}^{m'+m''}} v(x) e^{-i z\cdot x} dx \right|^2\\
&\qquad=\sum_{(z',z'')\in\mathbb{Z}^{m'+m''}}(1+|z|^2)^s\left| \int_{\mathbb{T}^{m'}} \tilde{v}(x') e^{-i z'\cdot x'} dx \int_{\mathbb{T}^{n''}}e^{-i z''\cdot x''}dx'' \right|^2\\
&\qquad=(2\pi)^{2n''}\sum_{z'\in\mathbb{Z}^{n'}}(1+|z'|^2)^s\left| \int_{\mathbb{T}^{n'}} \tilde{v}(x') e^{-i m'\cdot x'} dx' \right|^2\\
&\qquad=(2\pi)^{2n''}\sum_{z'\in\mathbb{Z}^{m'}}(1+|z'|^2)^s\left|\langle \tilde{v},e^{iz'\cdot x'}\rangle\right|^2
\end{align*}
and the lemma follows.
\end{proof}

\begin{proof}[Proof of Theorem \ref{inOmega}] Many of the computations in this proof are very similar to those we have carried out to prove Proposition \ref{notCbeta>0} and \ref{notCbeta>alpha}. For example, to show that $u$ is  harmonic on $\Omega$ and continuous on $\overline\Omega$ it suffices to check that the series $\sum_{k=0}^\infty a_k\Re(x_1+ i x_2)^k $ converges absolutely and uniformly for $x_1^2+x_2^2\leq 1$, and thus for $x=(x_1,x_2,x'')$ in $\overline\Omega$. To do that we can resort to \eqref{notCbeta>0.eq1} and \eqref{notCbeta>alpha.eq1}. Also, arguing as in the proofs of Propositions \ref{notCbeta>0} and \ref{notCbeta>alpha} we can see that   
$u\in C^{0,\alpha}(\overline\Omega)$ and $u\notin C^{0,\alpha+\varepsilon}(\overline\Omega)$ for any $\varepsilon>0$. 

To complete the proof we have to verify the last statement about the membership (or, better, lack of membership) in $H^{\sigma+1/2}(\Omega)$. We first consider the case where $\alpha=0$.  For the sake of contradiction we assume that $u$ belongs to $H^{\sigma+1/2}(\Omega)$ for some $\sigma>0$. Then, by by the trace Theorem \ref{trace} we have that $u_{|\partial\Omega}\in H^{\sigma}(\partial\Omega)$ and in particular that $u_{\vert_{\mathbb{S}_1\times [-\delta,\delta]^{n-2}}}\in H^{\sigma}(\mathbb{S}_1\times [-\delta,\delta]^{n-2})$ for some $\delta>0$ as in \eqref{delta}. Since $\mathbb{T}\ni t\mapsto (\cos t, \sin t)\in\mathbb{S}_1$ is a $C^\infty$-diffeomorphism and $u_{\vert_{\mathbb{S}_1\times [-\delta,\delta]^{n-2}}}$ is constant---and therefore periodic---in the direction $x''$, we deduce that  $v(t,x''):=u(\cos t,\sin t,x'')$ belongs to $H^{\sigma}(\mathbb{T}^{n-1})$. Moreover, $v(t,x'')$ is constant in the direction of $x''$ and we have 
\[
v(t,x'')=\sum_{k=0}^\infty a_k \cos(kt)\quad\text{for all }(t,x'')\in\mathbb{T}^{n-1}\,.
\]
Hence, Lemma \ref{torustrace} implies that $\tilde{v}(t):=\sum_{k=0}^\infty a_k \cos(kt)$ belongs to $H^{\sigma}(\mathbb{T})$ and thus (again because $\mathbb{T}\ni t\mapsto (\cos t, \sin t)\in\mathbb{S}_1$ is a $C^\infty$-diffeomorphism) we have that the function 
\[
\tilde{u}(x):=\sum_{k=0}^\infty a_k\Re(x_1+ i x_2)^k\quad\text{of }x=(x_1,x_2)\in\mathbb{S}_1
\]
belongs to $H^{\sigma}(\mathbb{S}_1)$. However, $\tilde u$ is the trace on $\mathbb{S}_1$ of the function Proposition \ref{notCbeta>0} in the case where $n=2$ and, in the proof of Proposition \ref{notCbeta>0} we have seen that $\tilde u$ does not belong to $H^{\sigma}(\mathbb{S}_1)$ for any $\sigma>0$. So we have a contradiction and we deduce that $u$ does not belong to $H^{\sigma+1/2}(\Omega)$ for any $\sigma>0$. The proof for $0<\alpha<1$ is similar, we just use  Proposition \ref{notCbeta>alpha} instead of Proposition \ref{notCbeta>0}.
\end{proof}

We may now ``transplant'' the function $u$ from the set $\Omega$ to a ball $\mathbb{B}_n$. To do so we further assume that the function $\mu$ in the definition of $\Omega$ is decreasing (as, for example, the function in \eqref{supermu}). Then the corresponding set $\Omega$ is  star-convex  with respect to the origin. In particular, 
the map
\begin{equation}\label{zeta}
\zeta:\partial\Omega\ni x\mapsto \frac{x}{|x|}\in\mathbb{S}_{n-1}
\end{equation}
is a $C^\infty$-diffeomorphism. We have the following

\begin{thm}\label{fromOmega} Let $n\ge 3$, let $\Omega$ be as above with $\mu$ decreasing, let $\zeta$ be as in \eqref{zeta}, let $0\le \alpha<1$ and let $u$ be as in Theorem \ref{inOmega}. There exists a unique function $w\in C^{0,\alpha}(\overline{\mathbb{B}_n})$ that is harmonic in $\mathbb{B}_n$ and such that 
\[
w(\xi)=u\circ\zeta^{-1}(\xi)\quad\text{for all }\xi\in\mathbb{S}_{n-1}\,.
\]
Moreover, $w$ does not belong to $C^{0,\alpha+\epsilon}(\overline{\mathbb{B}_n})$ for any $\epsilon>0$ and does not belong to $H^{\sigma+1/2}(\mathbb{B}_n)$ for any $\sigma>0$ if $\alpha=0$ and for any $\sigma\ge \alpha$ if $0<\alpha<1$.
\end{thm}
\begin{proof} In the proof of Theorem \ref{inOmega} we have seen that the restriction of $u$ to the boundary of $\partial\Omega$ is in $C^{0,\alpha}(\partial\Omega)$ but not in $C^{0,\alpha+\epsilon}(\partial\Omega)$ for $\epsilon>0$ and not in $H^\sigma(\partial\Omega)$ for $\sigma>0$ if $\alpha=0$ and for $\sigma\ge \alpha$ if $0<\alpha<1$. Since $\zeta$ is a $C^\infty$-diffeomorphism it follows that $u\circ\zeta^{-1}$ belongs to $C^{0,\alpha}(\mathbb{S}_{n-1})$ but is not in $C^{0,\alpha+\epsilon}(\mathbb{S}_{n-1})$ and neither in $H^\sigma(\mathbb{S}_{n-1})$, with $\epsilon$ and  $\sigma$ as above. Then the existence of $w$ is a consequence of Proposition \ref{dirichlet}. We can see that $w\notin C^{0,\alpha+\epsilon}(\overline{\mathbb{B}_n})$ and the last statement about the (lack of) membership in $H^{\sigma+1/2}(\mathbb{B}_n)$ is a consequence of the trace  Theorem \ref{trace}. 
\end{proof}

The function $w$ meets both conditions \eqref{p1} and \eqref{p2} and it is explicit up to the solution of a Dirichlet boundary value problem. For example, we might obtain $w$ as a series of double layer potentials
\[
w=\sum_{k=0}^\infty a_k\, w^+[\psi_k]
\]
with densities  $\psi_k\in C^\infty(\mathbb{S}_{n-1})$ solutions of the boundary integral equations
\[
\frac{1}{2}\psi_k+W[\psi_k]=\Re(\zeta^{-1}_1+i\zeta^{-1}_2)^k\quad\text{on }\mathbb{S}_{n-1}\,.
\] 
Here we understand that $\zeta^{-1}_1$ and $\zeta^{-1}_2$ are the first and second component of $\zeta^{-1}$, respectively. Exploiting the symmetries of the support of integration $\mathbb{S}_{n-1}$ we can see that the equations above can also be written as
\[
\psi_k(x)+\fint_{\mathbb{S}_{n-1}}\frac{\psi_k(y)}{|x-y|^{n-2}}d\sigma_y=2\Re(\zeta^{-1}_1(x)+i\zeta^{-1}_2(x))^k
\]
for all $x\in\mathbb{S}_{n-1}$.

\section*{Acknowledgements}

The authors wish to thank  Pier Domenico Lamberti for the inspiring discussion that kindled the work on this topic and Valentina Casarino for the insights on spherical harmonics theory. Our gratitude also goes  to Martin Costabel, whose advice was essential for the last example. Some of the results here presented are also part of the  master thesis of Brian Luczak at the University of Tulsa \cite{Lu19}. 

Roberto Bramati is supported by the FWO Odysseus 1 grant G.0H94.18N: Analysis and Partial Differential Equations and by the Methusalem programme of the Ghent University Special Research Fund (BOF) (Grant number 01M01021).

Roberto Bramati and Matteo Dalla Riva are members of the ``Gruppo Nazionale per l'Analisi Matematica, la Probabilit\`a e le loro Applicazioni'' (GNAMPA) of the ``Istituto Nazionale di Alta Matematica'' (INdAM).

\end{document}